\theoremstyle{plain}
\newtheorem{theorem}{Theorem}[section]
\newtheorem{lemma}[theorem]{Lemma}
\newtheorem{prop}[theorem]{Proposition}
\newtheorem{corollary}[theorem]{Corollary}
\theoremstyle{definition}
\newtheorem{exmp}[theorem]{Example}
\newtheorem{remark}[theorem]{Remark}
\newcommand{\Nil}{\mbox{\rm Nil}}
\newcommand{\supp}{\mbox{\rm supp}}
\newcommand{\Z}{\mathbb{Z}}
\newcommand{\R}{\mathbb{R}}
\newcommand{\Cal}[1]{{\mathcal #1}}
\DeclareMathOperator{\id}{id}
\author{Mai Hoang Bien$^{1,2}$ and Johan \"{O}inert$^2$}
\address{$^{1}$Department of Basic Sciences, University of Architecture, 196 Pasteur Str., Dist. 1, Ho Chi Minh City, Vietnam}
\email{maihoangbien012@yahoo.com}
\address{$^{2}$Blekinge Institute of Technology, Department of Mathematics and Natural Sciences, SE-37179 Karlskrona, Sweden}
\email{johan.oinert@bth.se}
\thanks{This research was supported by the Crafoord Foundation, research grant no. 20150871.}
\date{2017-01-01}
\keywords{Ore extension, differential polynomial ring, quasi-duo ring, maximal ideal}
\subjclass[2010]{16S32, 16W70, 16D25}
\begin{document}

\begin{abstract}
In this article we give a characterization of left (right) quasi-duo differential polynomial rings.
In particular, we show that a differential polynomial ring is left quasi-duo if and only if it is right quasi-duo.
This yields a partial answer to a question posed by Lam and Dugas in 2005.
We provide non-trivial examples of such rings and give a complete description of the maximal ideals of an arbitrary quasi-duo differential polynomial ring.
Moreover, we show that there is no left (right) quasi-duo differential polynomial ring in several indeterminates.
\end{abstract}

\title{Quasi-duo differential polynomial rings}
\maketitle


\section{Introduction}

Throughout this article, all rings are assumed to be unital and associative.
Following \cite{Feller_58},
a 
ring $S$ is said to be \emph{left (right) duo}
if every left (right) ideal of $S$ is a two-sided ideal.
More generally, $S$ is said to be \emph{left (right) quasi-duo}
if every maximal left (right) ideal of $S$ is a two-sided ideal (see e.g. \cite{Yu95})
or, equivalently, if every left (right) primitive homomorphic image of $S$ is a division ring (see e.g. \cite[Proposition 4]{Pu15}).
A ring which is both left and right quasi-duo is called \emph{quasi-duo}.

Quasi-duo rings appear in various places in ring theory, e.g. in the investigation of the K\"{o}the conjecture (see e.g. \cite[Proposition 2.5]{Pu06}).
There are many open problems concerning left (right) quasi-duo rings, one of which is due to Lam and Dugas who ask whether there exists a right quasi-duo ring
which is not left quasi-duo (see \cite[Question 7.7]{Pa_LaDu_05}).
Our main result (Theorem~\ref{MainResult}) shows that such an example can not be found in the class of differential polynomial rings.

Recall that an \emph{Ore extension} $R[x;\sigma,\delta]$ is constructed from a 
ring $R$, a ring endomorphism $\sigma : R\to R$ (respecting $1_R$)
and a $\sigma$-derivation $\delta : R \to R$,
i.e. an additive map satisfying
\begin{displaymath}
	\delta(rs)=\sigma(r)\delta(s)+\delta(r)s, \quad \forall r,s\in R.
\end{displaymath}
As a left $R$-module $R[x;\sigma,\delta]$ is equal to the usual polynomial ring $R[x]$.
The multiplication on $R[x;\sigma,\delta]$ is defined by the rule
\begin{displaymath}
	xr = \sigma(r)x + \delta(r)
\end{displaymath}
for $r\in R$. This turns the Ore extension $R[x;\sigma,\delta]$ into a unital and associative ring (see e.g. \cite{Ny13}).
If $\delta=0$, then $R[x;\sigma,0]$ is said to be a \emph{skew polynomial ring}.
If, on the other hand, $\sigma=\id_R$, then $\delta$ is called a \emph{derivation}
and
$R[x;\id_R,\delta]$ is said to be a \emph{differential polynomial ring}
and will simply be denoted by $R[x;\delta]$.

In \cite{Pa_LeMaPu_08}, Leroy, Matczuk and Puczylowski
obtained a complete characterization of left (right) quasi-duo skew polynomial rings.
In \cite{Le10}, the same authors continued their investigation and gave a complete characterization of left (right) quasi-duo $\Z$-graded rings.

In this article we direct our attention to another type of Ore extensions, namely the differential polynomial rings.
Our main result is the following.

\begin{theorem}\label{MainResult}
Let $S=R[x;\delta]$ be a differential polynomial ring, and put $J_0 = J(S) \cap R$.
The following five assertions are equivalent:
	\begin{enumerate}[{\rm (i)}]
		\item $S$ is left quasi-duo;
		\item $S$ is right quasi-duo;
		\item Every left ideal of $S$ containing the Jacobson radical $J(S)$ is two-sided,
		i.e. $S/J(S)$ is left duo;
		\item Every right ideal of $S$ containing the Jacobson radical $J(S)$ is two-sided, i.e. $S/J(S)$ is right duo;
		\item\label{MainTheorem:Cond5} The quotient ring $R/J_0$ is commutative and $\delta(R)\subseteq J_0$.
	\end{enumerate}
\end{theorem}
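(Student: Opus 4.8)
The plan is to prove the cycle $(v)\Rightarrow(iii)\Rightarrow(i)\Rightarrow(v)$ together with $(v)\Rightarrow(iv)\Rightarrow(ii)\Rightarrow(v)$, so that the only ``hard'' arrows are $(i)\Rightarrow(v)$ and $(ii)\Rightarrow(v)$. First I would dispatch the cheap arrows. Since every maximal one-sided ideal of $S$ contains $J(S)$, assertion $(iii)$ (resp. $(iv)$) says in particular that every maximal left (resp. right) ideal is two-sided, giving $(iii)\Rightarrow(i)$ and $(iv)\Rightarrow(ii)$ at once. For $(v)\Rightarrow(iii)$ and $(v)\Rightarrow(iv)$ I would show that $(v)$ forces $S/J(S)$ to be commutative, hence both left and right duo: first note that $J_0$ is $\delta$-invariant (if $a\in J(S)\cap R$ then $\delta(a)=xa-ax\in J(S)\cap R=J_0$) and that $J_0[x;\delta]\subseteq J(S)$ (each $ax^i$ with $a\in J_0$ lies in $J(S)$ since $J(S)$ is a right ideal); then in $\bar S=S/J(S)$ the hypotheses $\delta(R)\subseteq J_0$ and commutativity of $R/J_0$ give $\bar x\,\bar r=\bar r\,\bar x$ and $\bar r\,\bar s=\bar s\,\bar r$ for all $r,s\in R$, and since $\bar S$ is generated by $\bar x$ and the image of $R$, it is commutative.

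For the hard part it suffices to treat $(i)\Rightarrow(v)$: the opposite ring satisfies $S^{\mathrm{op}}\cong R^{\mathrm{op}}[x;-\delta]$, and since $S$ is right quasi-duo iff $S^{\mathrm{op}}$ is left quasi-duo, while condition $(v)$ is invariant under passing from $(R,\delta)$ to $(R^{\mathrm{op}},-\delta)$, the implication $(ii)\Rightarrow(v)$ follows from $(i)\Rightarrow(v)$ by duality. To prove $(i)\Rightarrow(v)$ I would first reduce to the case $J_0=0$: factoring out the two-sided ideal $J_0[x;\delta]\subseteq J(S)$ replaces $S$ by $(R/J_0)[x;\bar\delta]$, which is again left quasi-duo (homomorphic images of left quasi-duo rings are left quasi-duo) and now satisfies $J(\,\cdot\,)\cap(R/J_0)=0$, because $J$ of the quotient equals $J(S)/J_0[x;\delta]$ and intersecting with $R/J_0$ yields $(J(S)\cap R)/J_0=0$. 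In this reduced situation assertion $(v)$ becomes simply: $R$ is commutative and $\delta=0$.

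So the heart of the matter is the statement: \emph{if $S=R[x;\delta]$ is left quasi-duo and $J(S)\cap R=0$, then $R$ is commutative and $\delta=0$}. Here I would argue by contraposition, producing a left primitive homomorphic image of $S$ that is not a division ring (equivalently, a maximal left ideal that fails to be two-sided), contradicting the primitive-image characterization of left quasi-duoness recalled in the introduction. The guiding mechanism is that the indeterminate $x$ allows one to extend the residue structure of $R$: for instance over the real quaternions $R=\mathbb{H}$ with $\delta=0$ one has $\mathbb{H}[x]\twoheadrightarrow \mathbb{H}\otimes_{\R}\C\cong M_2(\C)$, a non-division primitive quotient, so the non-commutativity of $R$ is only detected after adjoining $x$. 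Concretely, starting from non-commuting images in a residue division ring of $R$, or from an element $a$ with $\delta(a)\neq0$ (so that $[\bar x,\bar a]=\overline{\delta(a)}\neq0$ in some quotient), I would build a primitive image in which $x$ plays the role of a central-extending parameter that splits the relevant division algebra (in positive characteristic $p$ exploiting the identity $x^{p}r=rx^{p}+\delta^{p}(r)$, so that a suitable polynomial in $x^{p}$ becomes central; in characteristic zero producing a simple Weyl-type image), in either case a simple ring that is not a division ring.

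\textbf{Main obstacle.} The essential difficulty is precisely this last construction. Passing to a single primitive quotient $D=S/P$ is useless on its own: by hypothesis $D$ is a division ring, and in $D$ one may perfectly well have $[\bar x,\bar r]=\overline{\delta(r)}\neq0$ or non-commuting images of $R$, so neither $\delta(R)\subseteq J(S)$ nor commutativity of $R/J_0$ can be detected one quotient at a time. One must instead use that \emph{all} primitive quotients are division rings simultaneously and manufacture, out of any genuine non-commutativity or any surviving derivation, a primitive image containing nonzero nilpotents or a full matrix block. Showing that a non-central residue division algebra of $R$, or a nonzero descended derivation, is always split by the adjunction of $x$ is where the real work lies; by comparison the bookkeeping above — the $\delta$-invariance of $J_0$, the inclusion $J_0[x;\delta]\subseteq J(S)$, and the radical reduction — is routine.
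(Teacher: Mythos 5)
Your handling of the easy arrows is fine: (iii)$\Rightarrow$(i), (v)$\Rightarrow$(iii) via the $\delta$-invariance of $J_0$ and the inclusion $J_0[x;\delta]\subseteq J(S)$, and the reduction of (ii)$\Rightarrow$(v) to (i)$\Rightarrow$(v) through $S^{\mathrm{op}}\cong R^{\mathrm{op}}[x;-\delta]$ are all correct (the paper does the first two the same way and simply says the right-sided case is ``analogous''). But the proposal has a genuine gap exactly where you flag it: the implication (i)$\Rightarrow$(v) is never proved. Your plan is to argue by contraposition and manufacture a left primitive image of $S$ that is not a division ring by ``splitting'' a residue division algebra of $R$ or a surviving derivation through the adjunction of $x$; you then concede that carrying this out is ``where the real work lies.'' That concession is the whole theorem: without that construction nothing is established, and it is far from clear that such a splitting argument can be made to work in general (your quaternion example relies on very special structure).

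The paper avoids this obstacle entirely with an elementary direct argument; no non-division primitive quotient is ever constructed. Given $S$ left quasi-duo, it splits into two cases according to whether some maximal left ideal $M$ of $S$ satisfies $M\cap R=\{0\}$. If such an $M$ exists, then $M$ is two-sided and $S/M$ is a division ring; for $a\in R\setminus\{0\}$ one writes the inverse of $\bar a$ in $S/M$ as a polynomial and compares constant terms to get $aa_0-1\in M\cap R=\{0\}$, so $R$ itself is a division ring. Then one picks a maximal left ideal $L\supseteq Sx$ and a maximal left ideal $M_a\supseteq S(x-a)$: quasi-duoness makes these two-sided, so $\delta(r)=xr-rx\in L\cap R$ and $ab-ba=b(x-a)-(x-a)b\in M_a\cap R$, and simplicity of $R$ forces these intersections to be zero, giving $\delta=0$ and commutativity. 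If instead every maximal left ideal $M$ meets $R$ nontrivially, one passes to $(R/M_0)[x;\bar\delta]$ with $M_0=M\cap R$ (a quasi-duo homomorphic image in which the image of $M$ meets $R/M_0$ trivially), applies the first case there, and concludes $\delta(a),\,ab-ba\in M$ for \emph{every} maximal left ideal $M$; intersecting over all $M$ lands these elements in $J(S)\cap R=J_0$. The key idea you are missing is that one should exploit the two-sidedness of maximal left ideals containing the specific elements $x$ and $x-a$, rather than trying to exhibit a non-division primitive quotient.
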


This result provides a complete characterization of left (right) quasi-duo differential polynomial rings.
In particular, it shows that a differential polynomial ring is left quasi-duo if and only if it is right quasi-duo,
thereby yielding a partial answer to \cite[Question 7.7]{Pa_LaDu_05}.
Notice that for skew polynomial rings, which were studied in \cite{Pa_LeMaPu_08}, the same question is still wide open.

This article is organized as follows.

In Section~\ref{Sec:Proof} we prove Theorem~\ref{MainResult}.
In Section~\ref{Sec:Examples} we show that if $R[x;\delta]$ is quasi-duo, then if $R$ belongs to certain classes of rings, we can conclude that $R[x;\delta]$ is commutative (see Proposition~\ref{main_3}). This means, in particular, that $\delta=0$ and hence $R[x;\delta]$ is a polynomial ring. We also provide examples of quasi-duo differential polynomial rings which are not polynomial rings (see Example~\ref{e3.5}).
In Section~\ref{Sec:MaximalIdeals} we give a complete description of the maximal ideals of quasi-duo differential polynomial rings (see Theorem~\ref{t4.4}).
In Section~\ref{Sec:SeveralVariables} we consider differential polynomial rings in several indeterminates, $R[X;D]$, defined by a (countable) set of variables $X$ and a family $D$ of derivations on $R$. We show that $R[X;D]$ can never be quasi-duo if $X$ consist of more than one variable (see Theorem~\ref{thm:several}).

\section{Proof of the main result}\label{Sec:Proof}

In this section we give a proof of our main result, Theorem~\ref{MainResult}.
We begin by showing that a left (right) quasi-duo differential polynomial ring over a simple ring
is necessarily a polynomial ring.

\begin{lemma}\label{t2.2}
Let $S=R[x;\delta]$ be a left (right) quasi-duo differential polynomial ring.
If $R$ is a simple ring, then $R$ is a field and $\delta=0$.
\end{lemma}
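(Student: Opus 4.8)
The plan is to first reduce to the case where $R$ is a division ring, and then to exploit the very simple structure of maximal left ideals over a division ring. Throughout I assume $S$ is left quasi-duo (the right quasi-duo case is entirely symmetric, working with right ideals and right division). I will use two standard facts: any homomorphic image of a left quasi-duo ring is again left quasi-duo, and a \emph{simple} left quasi-duo ring must be a division ring (take a maximal left ideal; by hypothesis it is two-sided, and being proper it is zero by simplicity, so $S$ has no nonzero proper left ideal). Since $x$ is not a unit in $S=R[x;\delta]$ (the degree of $xf$ is at least $1$ for every nonzero $f$), $S$ itself is not a division ring.

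\emph{Step 1: $R$ is a division ring.} Choose a maximal left ideal $M$ of $S$; it is nonzero because $S$ is not a division ring, and it is two-sided by the quasi-duo hypothesis. Hence $D:=S/M$ is a division ring, and since $M\cap R$ is a two-sided ideal of the simple ring $R$ not containing $1$, we get $M\cap R=0$, so $R$ embeds into $D$. Let $n\ge 1$ be the minimal degree of a nonzero element of $M$. The leading coefficients of the degree-$n$ elements of $M$ form a two-sided ideal of $R$ (using that $M$ is two-sided together with $x^n r \equiv rx^n$ modulo lower-order terms), which is nonzero and hence equals $R$; so $M$ contains a \emph{monic} polynomial $g$ of degree $n$. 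Left division by the monic $g$ shows $M=Sg$, and consequently $D=S/Sg$ is a \emph{free} left $R$-module with basis $\bar 1,\bar x,\dots,\overline{x^{\,n-1}}$. Now fix $0\ne a\in R$; in the division ring $D$ write $\bar a^{-1}=\sum_{i=0}^{n-1} c_i\,\overline{x^{\,i}}$ with $c_i\in R$, so that $\bar 1=\bar a\,\bar a^{-1}=\sum_{i=0}^{n-1}(ac_i)\,\overline{x^{\,i}}$. Comparing coordinates in the free module forces $ac_0=1$. Thus every nonzero element of $R$ is right invertible, which makes $R$ a division ring.

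\emph{Step 2: $\delta=0$ and $R$ is commutative.} Since $R$ is now a division ring, $S=R[x;\delta]$ is a left principal ideal domain, and for every $a\in R$ the left ideal $S(x-a)$ is maximal (the quotient $S/S(x-a)$ is one-dimensional over $R$). By the quasi-duo hypothesis $S(x-a)$ is two-sided, so $(x-a)a'\in S(x-a)$ for all $a'\in R$. A direct computation gives $(x-a)a' = a'(x-a) + \bigl(\delta(a')-(aa'-a'a)\bigr)$, and since the parenthesized constant lies in $S(x-a)\cap R=0$, we obtain $\delta(a')=aa'-a'a$ for all $a'\in R$. As this holds for every $a\in R$, taking $a=0$ yields $\delta=0$; feeding this back gives $aa'=a'a$ for all $a,a'$, i.e. $R$ is commutative. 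Therefore $R$ is a field and $\delta=0$.

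The crux is Step 1: the whole point is to manufacture a quotient of $S$ that is \emph{simultaneously} a division ring (via quasi-duo applied to a maximal left ideal) and a free left $R$-module containing $\bar 1$ in a basis (via the monic generator, which exists because $R$ is simple), after which comparing coordinates produces inverses inside $R$ for free. I expect this to be the main obstacle, since it is where simplicity of $R$ and the quasi-duo hypothesis must be combined; by contrast Step 2 is a short direct calculation with the linear maximal left ideals $S(x-a)$.
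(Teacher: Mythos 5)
Your proof is correct, but it takes a noticeably longer route than the paper's. The paper's argument is a two-line computation: pick a maximal left ideal $L$ containing $x$; since it is two-sided, $\delta(r)=xr-rx\in L\cap R$, and simplicity of $R$ forces $L\cap R=\{0\}$, so $\delta=0$; then pick a maximal left ideal containing $x-a$ and run the same commutator trick to get $ab=ba$. In other words, the paper never needs to know that $R$ is a division ring, nor that the relevant maximal left ideals are principal: simplicity of $R$ kills $M\cap R$ for \emph{any} proper two-sided $M$, which is all that is needed. Your Step 2 is essentially the paper's computation (your identity $(x-a)a'=a'(x-a)+(\delta(a')-(aa'-a'a))$ is exactly their $b(x-a)-(x-a)b$ rearranged), except that you insist on $S(x-a)$ itself being maximal and on $S(x-a)\cap R=0$ via degree arguments over a division ring --- which is why you need Step 1 at all. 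Your Step 1 (monic generator via the leading-coefficient ideal, division algorithm, comparison of coordinates in the free module $S/Sg$) is sound and is in fact a more carefully justified version of Case 1 in the paper's Proposition~\ref{Prop:Nec}, where the authors derive that $R$ is a division ring from a maximal left ideal $M$ with $M\cap R=\{0\}$; your version has the advantage of not using simplicity of $R$ there, so it would survive in more general settings. For this particular lemma, however, the detour is unnecessary work: the hypothesis that $R$ is simple lets one bypass the entire division-ring step.
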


\begin{proof}
Let $L$ be a maximal left (right) ideal of $S$ containing $x$.
By assumption, $L$ is a two-sided ideal of $S$ and hence, for any $r\in R$,
we get $\delta(r)=xr-rx \in L \cap R$.
Notice that $L\cap R$ is a two-sided ideal of $R$.
Using that $R$ is a simple ring and that $L \neq S$, we conclude that
$L \cap R = \{0\}$ and hence $\delta(r)=0$, for all $r\in R$.

Take $a\in R$ and let $M$ be a maximal left (right) ideal of $S$ containing $x-a$.
By assumption $M$ is a two-sided ideal of $S$ and hence, for any $b\in R$, we get
$ab-ba=b(x-a)-(x-a)b\in M$.
Using the same argument as before we get $M \cap R = \{0\}$ and hence $ab-ba=0$.
This shows that $R$ is a commutative and simple ring, i.e. a field.
\end{proof}

The following proposition gives us a necessary condition on the ring $R$ and the derivation $\delta$ in order for the differential polynomial ring $R[x;\delta]$ to be left (right) quasi-duo.

\begin{prop}\label{Prop:Nec}
Let $S=R[x;\delta]$ be a differential polynomial ring, and put $J_0=J(S)\cap R$.
If $S$ is left (right) quasi-duo, then $R/J_0$ is commutative and $\delta(R)\subseteq J_0$.
\end{prop}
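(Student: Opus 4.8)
The plan is to prove both conclusions at once by reducing them to a single statement about an individual maximal one-sided ideal. Since $S$ is left (right) quasi-duo, every maximal left (right) ideal $L$ of $S$ is two-sided, so $D_L := S/L$ is a division ring and $J(S) = \bigcap_L L$. For each such $L$ the intersection $L\cap R$ is a proper two-sided ideal of $R$, and it is a $\delta$-ideal (a two-sided ideal $I$ with $\delta(I)\subseteq I$): if $r\in L\cap R$, then $\delta(r)=xr-rx\in L\cap R$ because $L$ is two-sided. Consequently $J_0=J(S)\cap R=\bigcap_L(L\cap R)$ is a $\delta$-ideal, and it suffices to show, for every maximal left (right) ideal $L$, that $[R,R]\subseteq L\cap R$ and $\delta(R)\subseteq L\cap R$. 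Intersecting over all $L$ then yields $[R,R]\subseteq J_0$ and $\delta(R)\subseteq J_0$, which is precisely the assertion that $R/J_0$ is commutative and $\delta(R)\subseteq J_0$.

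To analyse a fixed $L$, I would pass to the quotient $\bar S := S/(L\cap R)S\cong \bar R[x;\bar\delta]$, where $\bar R := R/(L\cap R)$ and $\bar\delta$ is the induced derivation. This ring is again left (right) quasi-duo, being a homomorphic image of $S$, and $\bar R$ embeds into the division ring $D_L$, hence is a domain. The image $\bar L$ of $L$ is a maximal one-sided (two-sided) ideal of $\bar S$ with $\bar L\cap\bar R=0$ and $\bar S/\bar L\cong D_L$. Thus everything reduces to the following assertion: \emph{if $\bar R[x;\bar\delta]$ is quasi-duo, $\bar R$ is a domain, and there is a maximal left (right) ideal $\bar L$ with $\bar L\cap\bar R=0$, then $\bar R$ is a field and $\bar\delta=0$.} The two desired containments for $L$ follow, since a field is commutative and $\bar\delta=0$ means $\delta(R)\subseteq L\cap R$.

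For this reduced statement I would first record the input coming from Lemma~\ref{t2.2}: for any quasi-duo differential polynomial ring $\bar R[x;\bar\delta]$ and any maximal $\delta$-ideal $\mathfrak m$ of $\bar R$ (i.e. a $\delta$-ideal maximal among proper $\delta$-ideals), the quotient $\bar R/\mathfrak m$ is a field and $\bar\delta(\bar R)\subseteq\mathfrak m$. Indeed $\bar R/\mathfrak m$ is $\delta$-simple and $(\bar R/\mathfrak m)[x;\bar\delta]$ is a quasi-duo quotient; choosing a maximal one-sided ideal containing $x$ and intersecting with $\bar R/\mathfrak m$ produces a proper $\delta$-ideal, which must be zero, forcing $\bar\delta=0$; then $\bar R/\mathfrak m$ is simple and Lemma~\ref{t2.2} applies. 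Next, since $x$ is not invertible, $\bar L\neq 0$; picking a nonzero element of least degree $n\ge 1$ and using that $\bar L$ is two-sided, one checks that the leading coefficients of the degree-$n$ members of $\bar L$ form a nonzero two-sided $\bar\delta$-ideal $\mathfrak a$ of $\bar R$, the closure under $\bar\delta$ coming from the commutator $[x,-]$, which lowers degree.

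The main obstacle is the final step: upgrading this to show that $\bar L$ actually contains a \emph{monic} polynomial, equivalently that $\mathfrak a=\bar R$. This is a noncommutative, differential analogue of Zariski's lemma (the Artin--Tate phenomenon): the existence of a maximal ideal of $\bar R[x;\bar\delta]$ meeting the base domain $\bar R$ trivially ought to force $\bar R$ to be a field. I expect to argue by contradiction, embedding $\mathfrak a$ in a maximal $\delta$-ideal $\mathfrak m$ and reducing modulo $\mathfrak m$, where $\bar\delta$ vanishes and the leading coefficients of the least-degree elements of $\bar L$ die, contradicting either the minimality of $n$ or the division-ring structure of the quotient. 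Once a monic element is available, left division by it exhibits $D_L$ as a finitely generated left $\bar R$-module, which together with $\bar L\cap\bar R=0$ forces $\bar R$ to be simple; a final application of Lemma~\ref{t2.2} then delivers that $\bar R$ is a field with $\bar\delta=0$. Making the finiteness (monic) step rigorous is where the real work lies.
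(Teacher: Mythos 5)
Your reduction is sound and is essentially the paper's: you establish $\delta$-invariance of $L\cap R$ from two-sidedness of $L$, pass to $\bar R[x;\bar\delta]$ with $\bar R=R/(L\cap R)$, note that the quotient is again quasi-duo, and reduce everything to the claim that a quasi-duo $\bar R[x;\bar\delta]$ admitting a maximal one-sided ideal $\bar L$ with $\bar L\cap\bar R=0$ forces $\bar R$ to be a field and $\bar\delta=0$. The problem is that this reduced claim carries the entire content of the proposition, and you do not prove it: you yourself flag that ``making the finiteness (monic) step rigorous is where the real work lies.'' The route you sketch --- leading-coefficient $\delta$-ideals of $\bar L$, a contradiction via maximal $\delta$-ideals to force a monic element into $\bar L$, then an Artin--Tate-type argument --- is left incomplete at its decisive point, and at least one further step is unjustified as stated: it is not clear why $D_L$ being a finitely generated left $\bar R$-module together with $\bar L\cap\bar R=0$ would force $\bar R$ to be \emph{simple}.

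The paper closes the reduced claim much more directly and with none of this machinery. Since $\bar L$ is two-sided, $D_L=\bar S/\bar L$ is a division ring into which $\bar R$ injects. For $0\neq a\in\bar R$, the inverse of $\bar a$ in $D_L$ is the class of some polynomial $a_0+a_1x+\dots+a_nx^n$, and from $\bar a\,(\bar a_0+\dots+\bar a_nx^n)=\bar 1$ the paper deduces $\overline{aa_0}=\bar 1$, hence $aa_0-1\in \bar L\cap\bar R=0$, i.e. $aa_0=1$ in $\bar R$. Thus every nonzero element of $\bar R$ has a right inverse, so $\bar R$ is a division ring --- in particular simple --- and Lemma~\ref{t2.2}, which you already invoke for other purposes, immediately yields that $\bar R$ is a field and $\bar\delta=0$. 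No monic element of $\bar L$, no Zariski/Artin--Tate analogue, and no domain hypothesis is needed: Lemma~\ref{t2.2} already does all the heavy lifting once simplicity of $\bar R$ is known, and that simplicity comes from the division-ring structure of $D_L$ rather than from a finiteness argument. As written, your proposal is a plan with an acknowledged hole exactly where the proof has to be carried out.
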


\begin{proof}
Suppose that $S$ is left quasi-duo. (The right quasi-duo case is treated analogously.)
There are now two cases: (1) there exists a maximal left ideal $M$ of $S$ such that $M\cap R=\{0\}$;
or (2) $M\cap R\ne \{0\}$ for any non-zero maximal left ideal $M$ of $S$.
	
{\bf Case 1.} Suppose that there exists a maximal left ideal $M$ of $S$ such that $M\cap R=\{0\}$. In this case, $J_0=J(S)\cap R=\{0\}$. We claim that $R$ is a division ring. Indeed, since $S$ is left quasi-duo, $M$ is two-sided. Hence, the factor ring $K=S/M$ is a division ring. Let $a\in R\backslash\{0\}$. Since $a\notin M$, $\overline a$ is invertible in $K$. Let $a_0,a_1,\cdots,a_n\in R$ such that $\overline{a}(\overline {a_0}+\overline {a_1}x+\cdots+\overline {a_n}x^n)=\overline 1$ in $K$. Then $\overline {aa_0}=\overline{1}$ in $K$. Hence, $aa_0-1\in M$. By the assumption $R\cap M=\{0\}$, we get that $aa_0=1$. This means that every non-zero element of $R$ 
	has a right inverse in $R$,
	which implies that $R$ is a division ring.
	By Lemma~\ref{t2.2} we now conclude that $R$ is a field and that $\delta=0$.
	
{\bf Case 2.} Suppose that for any non-zero maximal left ideal $M$ of $S$, $M\cap R\ne \{0\}$ holds.
	Let $M$ be a non-zero maximal left ideal of $S$. Put $M_0=M\cap R$. 
	For any $a\in M_0$, one has $\delta(a)=xa-ax\in M$.
	Hence, $\delta(a)\in M_0$ for any $a\in M_0$. This means that $M_0$ is $\delta$-invariant. Therefore, $\delta$ induces a derivation $\overline{\delta}$ on $R/M_0$, namely, $\overline{\delta}(\overline{a})=\overline{\delta(a)}$, for $a\in R$.
	Consider the map
	\begin{displaymath}
		\varphi : R[x;\delta]\to (R/M_0)[x;\overline{\delta}],
		\quad a_0+a_1 x +\ldots+a_n x^n \mapsto \overline {a_0}+\overline{a_1} x + \ldots + \overline{a_n} x^n.
	\end{displaymath}
	It is easy to check that $\varphi$ is a surjective ring morphism. Thus, in view of \cite[Page 245]{Pa_LaDu_05}, $(R/M_0)[x;\overline{\delta}]$ is left quasi-duo. Moreover, the ideal $\overline M =\varphi(M)$ is a maximal left ideal of $(R/M_0)[x;\overline{\delta}]$ and $(R/M_0)\cap \overline{M}=\{\overline 0\}$. Now, by applying Case 1, we get that $R/M_0$ is commutative and that $\overline{\delta}=0$.

	It remains to show that $R/J_0$ is commutative and that $\delta(R)\subseteq J_0$ holds.
	To see this, notice that we have already proved that for any maximal left ideal $M$, $\overline \delta(\overline a)=\overline 0$ and $\overline{ab}=\overline{ba}$ for any $\overline{a},\overline{b} \in R/M_0$ where $M_0=M\cap R$. As a corollary, $\delta(a)\in M$ and $ab-ba\in M$, for all $a,b\in R$ and every maximal left ideal $M$ of $S$. Therefore, $\delta(a), ab-ba\in J(S)\cap R=J_0$ for any $a,b\in R$. Thus,  $\delta(R)\subseteq J_0$ and $R/J_0$ is commutative. This concludes the proof.
\end{proof}

We shall now prove Theorem~\ref{MainResult} and thereby get a complete characterization of quasi-duoness of $R[x;\delta]$.\\

\noindent{\bf Proof of Theorem~\ref{MainResult}}\\
We will only prove the left sided case, i.e. (i)$\Leftrightarrow$(iii)$\Leftrightarrow$(v).
The right sided case, i.e. (ii)$\Leftrightarrow$(iv)$\Leftrightarrow$(v), is treated analogously.
We will now show that (i)$\Rightarrow$(v)$\Rightarrow$(iii)$\Rightarrow$(i).
	
\noindent (i)$\Rightarrow$(v): This implication follows from Proposition~\ref{Prop:Nec}.
	
\noindent (v)$\Rightarrow$(iii):
	Consider the morphism $\varphi$ as defined in Case 2 of Proposition~\ref{Prop:Nec}:
	\begin{displaymath}
		\varphi : R[x;\delta]\to (R/J_0)[x;\overline{\delta}], \quad a_0+a_1x+\ldots+a_nx^n\mapsto \overline{a_0}+\overline{a_1}x+\ldots+\overline{a_n}x^n.
	\end{displaymath}
In our case, $R/J_0$ is commutative and $\overline{\delta}=0$. Hence, $(R/J_0)[x,\overline{\delta}]$ is commutative.
Notice that $\varphi$ is surjective and that $\ker (\varphi)=J_0[x,\delta]\subseteq J(S)$. Hence, $ S/(J_0[x,\delta])\cong (R/J_0)[x,\overline{\delta}]$ which is commutative. Therefore, every left ideal of $S$ containing $J(S)$ is two-sided.
	
\noindent (iii)$\Rightarrow$(i): This is trivial.	\qed

\begin{remark}
Theorem~\ref{MainResult} shows that a differential polynomial ring is left quasi-duo if and only if it is right quasi-duo.
Henceforth, we need not make a distinction between the left and the right properties and shall simply use the notion \emph{quasi-duo}.
\end{remark}

\section{Trivial and non-trivial quasi-duo differential polynomial rings}\label{Sec:Examples}

By Lemma~\ref{t2.2} we have observered that if the differential polynomial ring $R[x;\delta]$ is quasi-duo and $R$ is simple, then $R[x;\delta]$ is necessarily a commutative polynomial ring.
In this section we show that the same conclusion holds for large classes of rings $R$ which are not necessarily simple (see Proposition~\ref{main_3}).
We will also show that there exist quasi-duo differential polynomial rings which are not classical polynomial rings (see Example~\ref{e3.5}).

\begin{lemma}\label{l3.1}
Let $S=R[x;\delta]$ be a differential polynomial ring,
and denote the nilradical of $R$ by $\Nil(R)$.
Suppose that $R$ satisfies at least one of the following conditions:
	\begin{enumerate}[{\rm (i)}]
		\item $\Nil(R)=\{0\}$ and $R$ is a PI-ring, i.e. $R$ satisfies a polynomial identity;
		\item $\Nil(R)=\{0\}$ and $R$ satisfies the ascending chain condition on right annihilators.
	\end{enumerate}
Then, $S$ is semiprimitive, i.e. $J(S)=\{0\}$.
\end{lemma}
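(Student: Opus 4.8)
The plan is to reduce everything to the single ideal $I := J(S)\cap R$ of $R$ and to show that each hypothesis forces $I$ to be nil, whence $I=\{0\}$ since $\Nil(R)=\{0\}$. First I would record the basic structural facts. Because $J(S)$ is a two-sided ideal of $S$, for every $a\in I$ both $xa$ and $ax$ lie in $J(S)$, so $\delta(a)=xa-ax\in J(S)\cap R=I$; thus $I$ is a $\delta$-invariant ideal of $R$ and $\delta$ restricts to a derivation of $I$. The goal then splits in two: (a) show that $I$ is nil, and (b) establish the differential analogue of Amitsur's theorem, $J(S)=I[x;\delta]$. Granting both, the hypothesis $\Nil(R)=\{0\}$ gives $I=\{0\}$ (a nil ideal lies in the nilradical), and hence $J(S)=I[x;\delta]=\{0\}$, as desired.

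The heart of the matter is step (a), and this is exactly where the two hypotheses enter. The natural tool is Amitsur's classical trick: for $a\in I$ we have $ax\in J(S)$, so $1-ax$ is a unit of $S$, say with right inverse $\sum_{i=0}^{m}b_i x^i$. Expanding $(1-ax)\bigl(\sum_i b_i x^i\bigr)=1$ and using $xb_i=b_ix+\delta(b_i)$, comparison of coefficients yields the recursion $(1-a\delta)(b_0)=1$ and $(1-a\delta)(b_k)=ab_{k-1}$ for $1\le k\le m$, together with the top relation $ab_m=0$. When $\delta=0$ this collapses to $b_k=a^k$ and $a^{m+1}=0$, recovering Amitsur's conclusion that $a$ is nilpotent. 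The principal obstacle is precisely that for $\delta\ne 0$ the correction terms $a\delta(b_k)$ spoil this clean induction, so nilpotency of $a$ cannot be read off directly from the recursion.

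To overcome this I would treat the two cases separately. Under hypothesis (ii), I would exploit the ascending chain of right annihilators $\mathrm{r}(a)\subseteq \mathrm{r}(a^2)\subseteq\cdots$: since it stabilizes, the relations above (which propagate powers of $a$ modulo $\delta$-corrections) can be leveraged to show that a suitable power of $a$ annihilates the tail $b_m,b_{m-1},\ldots$, forcing $a$ to be nilpotent; as $\Nil(R)=\{0\}$ this means $a=0$. Under hypothesis (i), I would instead use the structure theory of semiprime PI-rings: since $\Nil(R)=\{0\}$, $R$ is semiprime, hence a subdirect product of prime PI-rings, each of which embeds in a matrix ring over a field (Posner's theorem); transporting the relations through these embeddings detects nilpotency of $a$ with a uniform bound governed by the PI-degree, and semiprimeness again gives $a=0$. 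In both cases it may be cleanest to cite the corresponding known descriptions of the Jacobson radical of a differential polynomial ring over PI, respectively annihilator-Noetherian, coefficient rings.

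For step (b) I would invoke (or reprove) the fact that $J(R[x;\delta])$ is the extension $\bigl(J(R[x;\delta])\cap R\bigr)[x;\delta]$ of its trace on $R$; combined with $I=\{0\}$ from step (a) this delivers $J(S)=\{0\}$. I expect step (a)—taming the $\delta$-correction terms so as to recover nilpotency—to be the main difficulty, with the PI hypothesis and the annihilator hypothesis playing exactly complementary roles in supplying the missing control.
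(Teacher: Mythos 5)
Your proposal is correct and in substance coincides with the paper's proof, which consists entirely of the one-line citation ``this is just a corollary of \cite{Pa_TsWuCh_07}'': the content of that reference is precisely the two facts you isolate, namely that $J(R[x;\delta])=\bigl(J(R[x;\delta])\cap R\bigr)[x;\delta]$ and that the trace $J(R[x;\delta])\cap R$ is a nil ideal under either hypothesis (i) or (ii), whence $\Nil(R)=\{0\}$ forces $J(S)=\{0\}$. Your from-scratch sketches of step (a) (leveraging the annihilator chain, or Posner's theorem with a uniform bound) are too vague to stand alone as written, but since you explicitly fall back on citing the known descriptions of the Jacobson radical for these two classes of coefficient rings, your argument closes in exactly the same way the paper's does.
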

\begin{proof}
This is just a corollary of \cite{Pa_TsWuCh_07}. 
\end{proof}

By the preceding lemma, the class of rings $R$ over which differential polynomial rings $R[x;\delta]$ are semiprimitive includes e.g. semiprime commutative rings, domains, and noetherian rings with $\Nil(R)=\{0\}$.

\begin{prop}\label{main_3}
Let $R$ be a ring satisfying $\Nil(R)=\{0\}$.
If $R$ is a PI-ring or satisfies the ascending chain condition on right annihilators, then the following two assertions are equivalent:
\begin{enumerate}[{\rm (i)}]
	\item $R[x;\delta]$ is quasi-duo;
	\item $R[x;\delta]$ is commutative.
\end{enumerate}
\end{prop}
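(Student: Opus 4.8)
The plan is to derive both implications directly from the machinery already in place, with the genuine content coming from combining Lemma~\ref{l3.1} with the equivalence (i)$\Leftrightarrow$(v) of Theorem~\ref{MainResult}.

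For the trivial direction (ii)$\Rightarrow$(i), I would simply note that a commutative ring is automatically quasi-duo, since in a commutative ring every (maximal) left ideal is trivially two-sided. So if $R[x;\delta]$ is commutative, it is quasi-duo, and nothing further is required.

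For the substantive direction (i)$\Rightarrow$(ii), I would first invoke Lemma~\ref{l3.1}. Because $\Nil(R)=\{0\}$ and $R$ is assumed to be either a PI-ring or to satisfy the ascending chain condition on right annihilators, the hypotheses of that lemma are met, and we conclude that $S=R[x;\delta]$ is semiprimitive, i.e. $J(S)=\{0\}$. In particular $J_0 = J(S)\cap R = \{0\}$. I would then feed this into Theorem~\ref{MainResult}: the assumption that $S$ is quasi-duo yields, via (i)$\Leftrightarrow$(v), that $R/J_0$ is commutative and $\delta(R)\subseteq J_0$. Substituting $J_0=\{0\}$ collapses the first condition to ``$R$ is commutative'' and the second to ``$\delta=0$''. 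Hence $R[x;\delta]=R[x;0]=R[x]$ is the ordinary polynomial ring over a commutative ring, which is commutative. This establishes (ii).

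I do not anticipate a real obstacle here: the whole argument is essentially the observation that the semiprimitivity supplied by Lemma~\ref{l3.1} forces $J_0$ to vanish, at which point condition (v) of Theorem~\ref{MainResult} degenerates into commutativity of $R$ together with $\delta=0$. The only point requiring a moment's care is to confirm that the two standing hypotheses on $R$ are precisely those under which Lemma~\ref{l3.1} applies --- which they are by construction --- so that the passage $J(S)=\{0\}\Rightarrow J_0=\{0\}$ is legitimate.
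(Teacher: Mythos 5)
Your argument is correct and is exactly the route the paper takes: its one-line proof simply cites Lemma~\ref{l3.1} and Theorem~\ref{MainResult}, and your write-up fills in precisely those details ($J(S)=\{0\}$ forces $J_0=\{0\}$, so condition (v) degenerates to $R$ commutative and $\delta=0$). Nothing is missing.
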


\begin{proof}
The desired conclusion follows from Lemma~\ref{l3.1} and Theorem~\ref{MainResult}.
\end{proof}

The following example demonstrates a quasi-duo differential polynomial ring which is non-trivial, i.e. not a polynomial ring.

\begin{exmp}\label{e3.5}
	Let $R=\left\{ 	\begin{bmatrix}
	a & b \\
	0 & c
	\end{bmatrix} \middle| \, a,b,c\in \R \right\}$.
	It is not difficult to see that the Jacobson radical of $R$ is $J(R)=\left\{ 	\begin{bmatrix}
	0 & b \\
	0 & 0
	\end{bmatrix} \middle| \, b\in \R \right\}.$
	Hence, $R/J(R)=\left\{ 	\begin{bmatrix}
	a & 0 \\
	0 & c
	\end{bmatrix} \middle| \, a,c\in \R \right\}$ is commutative.
	Take $A\in J(R)\backslash \{0\}$ and let $\delta$ be the inner derivation on $R$ defined by $\delta(B)=AB-BA$, for $B\in R$.
	Clearly, $\delta(R)\subseteq J(R)$. Now consider the corresponding differential polynomial ring $R[x;\delta]$.
	Using that $R$ is a PI-ring over $\R$, a field of characteristic zero, \cite[Theorem 1.2]{Pa_BeMaSh_16}
	yields that $J(R[x;\delta])=\Nil(R)[x;\delta]$.
	Since every element of $R$ is a root of a non-zero polynomial over $\R$, by \cite[Corollary 4.19]{Bo_La_91}, $J(R)=\Nil(R)$. This means that $J_0=J(R[x;\delta])\cap R=J(R)$. Therefore, $R[x;\delta]$ satisfies Theorem~\ref{MainResult}\eqref{MainTheorem:Cond5} and hence $R[x;\delta]$ is left and right quasi-duo.
\end{exmp}

\begin{remark}
One may replace $R$ by an arbitrary ring of upper triangular $n$ by $n$ matrices
and mimick the above construction.
This gives us a whole family of non-trivial quasi-duo differential polynomial ring.
\end{remark}

\section{Maximal ideals and the Jacobson radical}\label{Sec:MaximalIdeals}

In this section, we shall describe all maximal ideals of $S=R[x;\delta]$ in the case when $S$ is quasi-duo. Notice that in \cite{Pa_LeMaPu_08}, Leroy et al. gave a complete characterization of left (right) quasi-duo skew polynomial rings $R[x;\sigma,0]$ where $\sigma$ is an automorphism of $R$. In order to do so, they
first described all maximal ideals of $R[x;\sigma,0]$, and then used their description to characterize quasi-duoness. In this article we work in the opposite direction. In fact, we use our main result (Theorem~\ref{MainResult}) to find all maximal ideals of $S=R[x;\delta]$ as well as the Jacobson radical of $R[x;\delta]$.

\begin{remark}\label{r4.1}
Suppose that $R[x;\delta]$ is quasi-duo. Using the same argument as in the proof of Theorem~\ref{MainResult}, for any maximal ideal $M$ of $R[x;\delta]$, if $M_0=M\cap R$, then $R/M_0$ is a field and $\delta(R)\subseteq M_0$.
Let $I$ be a maximal ideal of $R$ such that $R/I$ is a field and $\delta(R)\subseteq I$. Then the map
	\begin{displaymath}
		\Phi_{I}\colon R[x;\delta]\to (R/I)[x], \quad a_0+a_1x+\cdots+a_nx^n\mapsto \overline {a_0}+\overline{a_1}x+\cdots+\overline{a_n}x^n
	\end{displaymath}
	is a (well-defined) surjective ring morphism. Moreover, $\ker \Phi_I =I[x;\delta]$. Hence,
	\begin{displaymath}
		R[x;\delta]/I[x;\delta]\cong (R/I)[x].
	\end{displaymath}
In particular, $R[x;\delta]/I[x;\delta]$ is semiprimitive by Lemma~\ref{l3.1}.
These facts will be used several times in this section.
\end{remark}

\begin{prop}\label{p4.2}
	Let $R[x;\delta]$ be a quasi-duo differential polynomial ring,
	and let $I$ and $\Phi_I$ be as in Remark~\ref{r4.1}. Then for any maximal ideal $N$ of $S$ containing $I$, we have
	\begin{displaymath}
		\Phi_I^{-1}(\Phi_I(N))=N.
	\end{displaymath}
\end{prop}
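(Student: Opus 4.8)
The plan is to observe that the asserted equality is nothing more than the statement, familiar from the correspondence (lattice isomorphism) theorem, that a surjective ring homomorphism induces a bijection between the ideals of the codomain and those ideals of the domain which contain the kernel. Thus the only genuine verification is that $N$ contains $\ker \Phi_I$; once this is in hand, the identity $\Phi_I^{-1}(\Phi_I(N)) = N$ follows by a one-line kernel chase. Notably, the maximality of $N$ plays no role whatsoever in the argument.

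First I would check that $\ker \Phi_I \subseteq N$. By Remark~\ref{r4.1} we have $\ker \Phi_I = I[x;\delta]$, the set of polynomials all of whose coefficients lie in $I$. Since $\delta(R) \subseteq I$, the subset $I[x;\delta]$ is precisely the two-sided ideal of $S$ generated by $I$: the defining relation $xa = ax + \delta(a)$ keeps us inside $I[x;\delta]$ for every $a \in I$, because $\delta(a) \in \delta(R) \subseteq I$. As $N$ is a two-sided ideal of $S$ containing $I \subseteq R \subseteq S$, it must also contain the two-sided ideal generated by $I$; hence $\ker \Phi_I = I[x;\delta] \subseteq N$.

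With this inclusion secured, I would finish as follows. The inclusion $N \subseteq \Phi_I^{-1}(\Phi_I(N))$ holds trivially for any map. For the reverse inclusion, I would take $s \in \Phi_I^{-1}(\Phi_I(N))$, so that $\Phi_I(s) \in \Phi_I(N)$, i.e. $\Phi_I(s) = \Phi_I(n)$ for some $n \in N$. Then $\Phi_I(s - n) = 0$, so $s - n \in \ker \Phi_I \subseteq N$, and therefore $s = (s - n) + n \in N$. This yields $\Phi_I^{-1}(\Phi_I(N)) \subseteq N$ and completes the proof.

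As for difficulties, there is essentially no serious obstacle, since the conclusion is a direct application of the correspondence theorem. The single point deserving care is the inclusion $\ker \Phi_I \subseteq N$, which rests on two facts supplied by the hypotheses: that $I[x;\delta]$ is the two-sided ideal of $S$ generated by $I$ (guaranteed by $\delta(R) \subseteq I$), and that $N$ is two-sided. I would remark in passing that the statement in fact holds for \emph{every} ideal $N$ of $S$ containing $I$, not merely the maximal ones.
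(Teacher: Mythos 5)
Your proof is correct, but it follows a different route from the paper's. The paper exploits the maximality of $N$: since $N \subseteq \Phi_I^{-1}(\Phi_I(N))$ and the latter is an ideal, either equality holds or $\Phi_I^{-1}(\Phi_I(N))=S$; in the second case the authors pick $f = a_0+a_1x+\cdots+a_nx^n \in N$ with $\Phi_I(f)=\overline{1}$, note that $1-a_0, a_1,\dots,a_n \in I \subseteq N$, hence $a_ix^i \in N$ for $i\geq 1$ and then $a_0 \in N$, forcing $1 \in N$ --- a contradiction. You instead verify directly that $\ker\Phi_I = I[x;\delta] \subseteq N$ (which is where the hypothesis $\delta(R)\subseteq I$ enters, ensuring $I[x;\delta]$ is a two-sided ideal of $S$, indeed the one generated by $I$) and then invoke the standard correspondence-theorem kernel chase. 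Your argument is the more transparent one and, as you observe, it proves the stronger statement that $\Phi_I^{-1}(\Phi_I(N))=N$ for \emph{every} ideal $N$ of $S$ containing $I$, with maximality playing no role; the paper's contradiction argument is really the same kernel computation carried out only for the single element mapping to $\overline{1}$, wrapped inside a maximality dichotomy that your approach shows to be unnecessary.
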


\begin{proof}
Clearly, $N\subseteq \Phi_I^{-1}(\Phi_I(N))$.
Because of the maximality of $N$, we have either $N=\Phi_I^{-1}(\Phi_I(N))$ or $\Phi_I^{-1}(\Phi_I(N))=S$.
If $\Phi_I^{-1}(\Phi_I(N))=S$, then there exists $a_0+a_1x+\ldots+a_nx^n\in N$ such that
\begin{displaymath}
	\overline 1=\overline {a_0}+\overline{a_1}x+\ldots+\overline{a_n}x^n.
\end{displaymath}
Hence, $1-a_0,a_1,\ldots, a_n\in I$.
For any $i \geq 1$ we have $a_i \in I$ and hence $a_i x^i \in N$.
This implies that $a_0 \in N$.
From the fact that $1-a_0\in N$, we conclude that $1\in N$.
This is a contradiction. Hence, $N=\Phi_I^{-1}(\Phi_I(N))$.
\end{proof}

Given a differential polynomial ring $R[x;\delta]$, denote by $\Cal{M}(R)$ the set of maximal ideals $I$ of $R$ such that $R/I$ is a field and $\delta(R)\subseteq I$.
Recall that a \emph{monic polynomial} in $R[x;\delta]$ is an element whose highest degree coefficient is equal to $1$.
For any $I\in \Cal{M}(R)$, the set of all irreducible monic polynomials in $(R/I)[x]$ is denoted by $\Cal{P}(R/I)$.

\begin{theorem}\label{t4.4}
Let $S=R[x;\delta]$ be a quasi-duo differential polynomial ring.
Consider the following two maps:
	\begin{enumerate}[\rm (i)]
		\item Associate with any pair $A=(I, x^n+\overline{a_{n-1}}x^{n-1}+\cdots+\overline{a_0})\in (\Cal{M}(R), \Cal{P}(R/I))$ the maximal ideal $M(A)=I[x,\delta]+\langle x^n+{a_{n-1}}x^{n-1}+\cdots+{a_0}\rangle_S$ of $S$; 
		\item Associate with any maximal ideal $M$ of $S$ the pair
		\begin{displaymath}
			A(M)=(M_0, p(x))\in (\Cal{M}(R), \Cal{P}(R/M_0))
		\end{displaymath}
where $M_0=M\cap R$ and $p(x)\in (R/M_0)[x]$ is such that $\langle p(x)\rangle_{(R/M_0)[x]}=\Phi_{M_0}(M)$. 
	\end{enumerate}				
	Then, these maps yield two mutually inverse bijections between the set of all maximal ideals of $S$ and the set
	\begin{displaymath}
		\{(I,p(x))\mid I\in \Cal{M}(R) \text{ and } p(x)\in \Cal{P}(R/I) \}.
	\end{displaymath}
	\end{theorem}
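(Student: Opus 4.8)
The plan is to pass everything to the commutative quotient furnished by Remark~\ref{r4.1}: for $I\in\Cal{M}(R)$ the morphism $\Phi_I$ induces an isomorphism $S/I[x;\delta]\cong (R/I)[x]$ onto a polynomial ring over the field $R/I$, which is a principal ideal domain. Thus the maximal ideals of $S$ lying over a fixed $I$ should correspond, via the lattice isomorphism of the correspondence theorem, to the maximal ideals of $(R/I)[x]$, and the latter are exactly the ideals $\langle p(x)\rangle$ with $p(x)\in\Cal{P}(R/I)$. The whole statement is then an organized instance of this correspondence, the key technical input being the saturation identity $\Phi_I^{-1}(\Phi_I(N))=N$ already established in Proposition~\ref{p4.2}.

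First I would show that the map in (i) is well-defined. Given $(I,p(x))$ with $p(x)=x^n+\overline{a_{n-1}}x^{n-1}+\cdots+\overline{a_0}$, I lift the coefficients to obtain a monic $q(x)\in S$ with $\Phi_I(q(x))=p(x)$. Since $\delta(R)\subseteq I$ makes $I$ a $\delta$-invariant ideal, $I[x;\delta]=\ker\Phi_I$ is two-sided and $M(A)=I[x;\delta]+\langle q(x)\rangle_S$ contains $\ker\Phi_I$; hence $M(A)=\Phi_I^{-1}(\Phi_I(M(A)))$ and, by surjectivity of $\Phi_I$, $\Phi_I(M(A))=\langle p(x)\rangle$. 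As $p(x)$ is irreducible over the field $R/I$, this ideal is maximal, so its preimage $M(A)$ is a maximal ideal of $S$; in particular $M(A)$ does not depend on the choice of lift $q(x)$. Finally, because $\deg p\geq 1$ no nonzero constant lies in $\langle p(x)\rangle$, which gives $M(A)\cap R=I$.

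Next I would check that the map in (ii) is well-defined. For a maximal ideal $M$ of $S$, Remark~\ref{r4.1} yields $M_0=M\cap R\in\Cal{M}(R)$, and $M\supseteq M_0$ forces $M\supseteq M_0[x;\delta]=\ker\Phi_{M_0}$. Proposition~\ref{p4.2} then gives $\Phi_{M_0}^{-1}(\Phi_{M_0}(M))=M\neq S$, so $\Phi_{M_0}(M)$ is a proper, hence (by the correspondence theorem) maximal, ideal of the principal ideal domain $(R/M_0)[x]$; its unique monic irreducible generator $p(x)\in\Cal{P}(R/M_0)$ produces the pair $A(M)=(M_0,p(x))$.

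It remains to verify that the two assignments are mutually inverse, and this is where the bookkeeping must be done carefully. Starting from $A=(I,p(x))$, the facts $M(A)\cap R=I$ and $\Phi_I(M(A))=\langle p(x)\rangle$ recorded above give $A(M(A))=A$. Conversely, starting from a maximal ideal $M$ with $A(M)=(M_0,p(x))$, the ideal $M(A(M))$ again contains $\ker\Phi_{M_0}$ and has image $\langle p(x)\rangle=\Phi_{M_0}(M)$, so $M(A(M))=\Phi_{M_0}^{-1}(\Phi_{M_0}(M))=M$ by Proposition~\ref{p4.2}. I expect no deep obstacle here: the substantive work (the quotient isomorphism and the saturation identity) is already available, and the only genuine care is needed in confirming the lift-independence of $M(A)$ and the contraction $M(A)\cap R=I$, and in ensuring that Proposition~\ref{p4.2} is always applied to ideals that contain the relevant kernel.
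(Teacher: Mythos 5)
Your proposal is correct and follows essentially the same route as the paper: both arguments reduce everything to the maximal ideals of the principal ideal domain $(R/I)[x]$ via the isomorphism $S/I[x;\delta]\cong (R/I)[x]$ from Remark~\ref{r4.1} together with the saturation identity of Proposition~\ref{p4.2}. The only cosmetic difference is in the final step: the paper deduces $M(A(M))=M$ from the inclusion $M(A(M))\subseteq M$ and the maximality of $M(A(M))$, whereas you compare images under $\Phi_{M_0}$ of two ideals containing the kernel; both are valid instances of the same correspondence.
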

	
\begin{proof}
We must first show that the maps from (i) and (ii) are well-defined, that is $M(A)$ is a maximal ideal of $S$ for any pair $A=(I,p(x))$, and $A(M)$ is an element of the set $\{(I,p(x))\mid I\in \Cal{M}(R) \text{ and } p(x)\in \Cal{P}(R/I) \}$ for any maximal ideal $M$ of $S$.
		
Let $A$ be a pair $(I, p(x))$ where $K=R/I$ is a field, $p(x)=x^n+\overline{a_{n-1}}x^{n-1}+\cdots+\overline{a_0}$ is an irreducible polynomial in $K[x]$. We have $L=\langle p(x)\rangle _{K[x]}$, i.e. the ideal of $K[x]$ generated by $p(x)$. Since $p(x)$ is irreducible, $L$ is maximal in $K[x]$. Notice that $\Phi_I(x^n+{a_{n-1}}x^{n-1}+\cdots+{a_0})=p(x)$.
Hence, if $ \Phi_I(f)\in L$, then $f\in\langle x^n+{a_{n-1}}x^{n-1}+\cdots+{a_0}\rangle_S+I[x;\delta]$
(using that $R[x;\delta]/I[x;\delta]\cong K[x]$ via $\Phi_I$ in Remark~\ref{r4.1}).
This implies that $\Phi_I^{-1}(L)=M(A)$.
Again, since $R[x;\delta]/I[x;\delta]\cong K[x]$ via $\Phi_I$, the ideal $M(A)$ is a maximal ideal of $R[x;\delta]/I[x;\delta]$.
Notice that $I[x;\delta]\subseteq M(A)$, and hence $M(A)$ is a maximal ideal of $S$. Thus, the map from (i) is well-defined.
		
Now assume that $M$ is a maximal ideal of $S$. Put $M_0=M\cap R$.
By Remark~\ref{r4.1}, $K=R/M_0$ is a field and $\delta(R)\subseteq M_0$.
Hence, $M_0\in \Cal{M}(R)$. Since $M$ is maximal in $S$, the ideal $\Phi_{M_0}(M)$ is also maximal in $K[x]$, by Remark~\ref{r4.1}. Therefore, there exists a unique irreducible monic polynomial $p(x)\in K[x]$ such that
		\begin{displaymath}
			\Phi_{M_0}(M)=\langle p(x)\rangle_{(R/M_0)[x]}.
		\end{displaymath}
Hence, the map from (ii) is well-defined.
		
Now we will show that $M(A(M))=M$ holds for any maximal ideal $M$ of $S$, and that $A(M(A))=A$ holds for any pair $A\in \{(I,p(x))\mid I\in \Cal{M}(R) \text{ and } p(x)\in \Cal{P}(R/I) \}$. Let $M$ be a maximal ideal of $S$. Assume that $A(M)=(M_0,p(x))$ where $M_0=M\cap R$ and $p(x)=x^n+\overline{a_{n-1}}x^{n-1}+\ldots+\overline{a_0}\in \Cal{P}(R/M_0)$. Notice that $M_0\subseteq M$, so that $M_0[x;\delta]\subseteq M$. Therefore, $x^n+{a_{n-1}}x^{n-1}+\ldots+{a_0}\in M$, which yields that $M_0[x;\delta]+\langle x^n+{a_{n-1}}x^{n-1}+\ldots+{a_0}\rangle_S\subseteq M$. Equivalently, $M(A(M))\subseteq M$. By the maximality of $M(A(M))$ in $S$ we get $M(A(M))=M$.
		
Let $A=(I, p(x))$ where $I$ is an ideal in $\Cal{M}(R)$ and $p(x)=x^n+\overline{a_{n-1}}x^{n-1}+\ldots+\overline{a_0}\in \Cal{P}(R/I)$. Then,
\begin{displaymath}
	M(A)=I[x;\delta]+\langle x^n+{a_{n-1}}x^{n-1}+\ldots+{a_0}\rangle_S.
\end{displaymath}
It is clear that $I=M(A)\cap R$ and that $\Phi_I(M(A))=\langle p(x)\rangle_{K[x]}$. Thus, $A=A(M(A))$.		
\end{proof}

We will now use the preceding theorem to describe the Jacobson radical of $R[x;\delta]$ and obtain a result
which resembles \cite[Theorem 1.2]{Pa_BeMaSh_16}.

\begin{corollary}
Let $S=R[x;\delta]$ be a quasi-duo differential polynomial ring.
Put ${K=\bigcap\limits_{I\in \Cal{M}(R)}I}$.
Then the Jacobson radical of $S$ is 
$J(S)=K[x;\delta]=(J(S)\cap R)[x;\delta]$.
\end{corollary}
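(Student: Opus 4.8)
The plan is to identify $J(S)$ with the intersection of all maximal two-sided ideals of $S$ and then to push that intersection through the parametrization supplied by Theorem~\ref{t4.4}. The only genuinely conceptual input is the first identification, which is where quasi-duoness is used; everything after that is bookkeeping built on Remark~\ref{r4.1} and Theorem~\ref{t4.4}.

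First I would prove that $J(S)=\bigcap_M M$, where $M$ ranges over all maximal (two-sided) ideals of $S$. One inclusion is formal: for each maximal ideal $M$ the quotient $S/M$ is simple, so $J(S/M)=\{0\}$, and since the Jacobson radical maps into the Jacobson radical of any homomorphic image, $J(S)\subseteq M$; hence $J(S)\subseteq\bigcap_M M$. For the reverse inclusion I would invoke that $S$ is quasi-duo: by the proof of Theorem~\ref{MainResult} one has $J_0[x;\delta]\subseteq J(S)$ together with $S/(J_0[x;\delta])\cong (R/J_0)[x]$, so $\overline{S}:=S/J(S)$ is a quotient of a commutative ring and is therefore commutative. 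For a commutative ring the Jacobson radical is the intersection of its maximal ideals, and by the correspondence theorem the maximal ideals of $\overline{S}$ are exactly the images of the maximal ideals of $S$ containing $J(S)$, which by the first inclusion is all of them. Pulling back $J(\overline{S})=\{0\}$ along the quotient map then gives $\bigcap_M M=J(S)$.

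Next I would evaluate this intersection via Theorem~\ref{t4.4}, which identifies the maximal ideals of $S$ with pairs $(I,p)$, $I\in\mathcal{M}(R)$ and $p\in\mathcal{P}(R/I)$, through $M(I,p)=\Phi_I^{-1}\bigl(\langle p\rangle_{(R/I)[x]}\bigr)$. Fixing $I$ and intersecting over $p$, I would use that the ideals $\langle p\rangle$ with $p\in\mathcal{P}(R/I)$ are precisely the maximal ideals of the principal ideal domain $(R/I)[x]$ (here $R/I$ is a field), whose intersection is $J\bigl((R/I)[x]\bigr)=\{0\}$ by the semiprimitivity recorded in Remark~\ref{r4.1}. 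Since $\Phi_I$ is surjective with kernel $I[x;\delta]$ and preimages commute with intersections, this yields $\bigcap_{p}M(I,p)=\Phi_I^{-1}(\{0\})=I[x;\delta]$. Intersecting now over $I\in\mathcal{M}(R)$ gives $J(S)=\bigcap_{I\in\mathcal{M}(R)}I[x;\delta]$.

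Finally I would move the intersection inside the coefficient ring. Each $I\in\mathcal{M}(R)$ satisfies $\delta(R)\subseteq I$, so $I$ is $\delta$-invariant and $I[x;\delta]$ is exactly the set of polynomials all of whose coefficients lie in $I$. A polynomial therefore belongs to every $I[x;\delta]$ if and only if each of its coefficients lies in $\bigcap_{I\in\mathcal{M}(R)}I=K$, which gives $\bigcap_{I}I[x;\delta]=K[x;\delta]$ and hence $J(S)=K[x;\delta]$. Intersecting with $R$ (the degree-zero part) then yields $J(S)\cap R=K$, so $K[x;\delta]=(J(S)\cap R)[x;\delta]$ and the full chain of equalities follows. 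I expect the main obstacle to be the very first step, namely pinning $J(S)$ down as the intersection of the maximal \emph{two-sided} ideals rather than the a priori smaller intersection of maximal left ideals; this is precisely where quasi-duoness, through the commutativity of $S/J(S)$, is indispensable.
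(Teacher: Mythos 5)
Your proposal is correct and follows essentially the same route as the paper: parametrize the maximal ideals via Theorem~\ref{t4.4}, intersect over $p\in\Cal{P}(R/I)$ for fixed $I$ to get $I[x;\delta]$ using the semiprimitivity of $(R/I)[x]$ from Remark~\ref{r4.1}, then intersect over $I\in\Cal{M}(R)$. The only difference is that you carefully justify the identification $J(S)=\bigcap_M M$ over maximal two-sided ideals (via the commutativity of $S/J(S)$), a step the paper's proof uses implicitly.
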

\begin{proof}
	By Theorem~\ref{t4.4}, $J(S)$ is the intersection of all ideals of the form
\begin{displaymath}
	I[x;\delta]+\langle x^n+{a_{n-1}}x^{n-1}+\ldots+{a_0}\rangle_S
\end{displaymath}
where $I$ ranges over $\Cal{M}(R)$ and the polynomial $x^n+\overline{a_{n-1}}x^{n-1}+\ldots+\overline{a_0}$ ranges over $\Cal{P}(R/I)$.
That is,
\begin{displaymath}
	J(S)= \bigcap\limits_{I\in \Cal{M}(R)}{ \bigcap\limits_{x^n+\overline{a_{n-1}}x^{n-1}+\ldots+\overline{a_0}\in \Cal{P}(R/I)}\left( I[x,\delta]+\langle x^n+\ldots+{a_0}\rangle_S \right)}.
\end{displaymath}
For any $I\in \Cal{M}(R)$, we define
\begin{displaymath}
	L_I=\bigcap\limits_{x^n+\overline{a_{n-1}}x^{n-1}+\ldots+\overline{a_0}\in \Cal{P}(R/I)} (I[x;\delta]+\langle x^n+a_{n-1}x^{n-1}+\ldots+{a_0}\rangle_S).
\end{displaymath}
According to the maps in Theorem~\ref{t4.4}, when $x^n+\overline{a_{n-1}}x^{n-1}+\ldots+\overline{a_0}$ ranges over $\Cal{P}(R/I)$, then $I[x;\delta]+\langle x^n+a_{n-1}x^{n-1}+\ldots+{a_0}\rangle_S$ ranges over the set of all maximal ideals of $R[x;\delta]/I[x;\delta]$. Hence, $L_I$ is the Jacobson radical of $R[x;\delta]/I[x;\delta]$. Thus, $L_I=I[x;\delta]$ using that $R[x;\delta]/I[x;\delta]$ is semiprimitive. Therefore,
$J(S)=K[x;\delta]=(J(S)\cap R)[x;\delta]$.
\end{proof}

\section{Differential polynomial rings in several indeterminates}\label{Sec:SeveralVariables}

In this section we shall show that differential polynomial rings in several indeterminates can never be quasi-duo (see Theorem~\ref{thm:several}).

Let us begin by recalling the definition of a differential polynomial ring in a set of indeterminates. Let $I$ be a non-empty (possibly infinite) countable set, let $D=\{\delta_i\mid i\in I\}$ be a family of
derivations on $R$ (by ``a family" we mean that all $\delta_i$'s need not be distinct), and let $X=\{x_i\mid i\in I\}$ be a set of distinct non-commuting indeterminates.
Given $R$, $D$ and $X$, we can define the ring $R[X;D]$ which is the set of all polynomials in the indeterminates $x_i\in X$ with coefficients from $R$. The addition in $R[X;D]$ is the natural one
and the multiplication is generated by the commutation rule $x_ia=ax_i+\delta_i(a)$, for $i\in I$. The ring $R[X, D]$ is called a \emph{differential polynomial ring in several indeterminates}.
Readers are referred to \cite{Pa_Bu_80,Pa_TsWuCh_07} for more details on this class of rings.
In particular, every element $f \in R[X;D]$ can be written in the form
\begin{displaymath}
	f=a_1t_1+a_2t_2+\cdots+a_nt_n,
\end{displaymath}
where $a_1,a_2,\cdots, a_n\in R\setminus \{0\}$ and $t_1,t_2,\ldots,t_n$ are distinct monomials in $X$, i.e. finite words in the alphabet $X$. In this case, the support of $f$ is defined as $\supp(f)=\{t_1,t_2,\ldots,t_n\}$. If $\delta_i=0$, for all $i\in I$, then $R[X]$ is called a \textit{free polynomial ring}.\\

A subring $B$ of a ring $A$ is called a \emph{corner subring} of $A$
if $B$ is unital, possibly with $1_A \neq 1_B$,
and if there exists an additive subgroup $C$ of $A$ such that $A=B\oplus C$
and $BC, CB \subseteq C$.
The subgroup $C$ is called a \emph{complement} of $B$.
We say that $B$ is a \emph{left corner} of $A$ if the complement $C$ satisfies $BC \subseteq C$ only.
The notion of a \emph{right corner} is defined analogously.
A classical example of a corner subring is given by $B=eAe$, where $e \in A$ is an idempotent.

The following lemma shows that quasi-duoness
of differential polynomial rings in several indeterminates can be inherited by certain subrings.

\begin{lemma}\label{l2.1}
Let $R$ be a ring, let $I$ be a non-empty countable set,
let $D=\{\delta_i\mid i\in I\}$ be a family of derivations on $R$,
and let $X=\{x_i\mid i\in I\}$ be a set of non-commuting indeterminates.
For any subset $J \subseteq I$, put $X_J=\{x_i\mid i\in J\}$ and $D_J=\{\delta_i\mid i\in J \}$.
The ring $S_J=R[X_J;D_J]$ is a right (left) corner of $S=R[X;D]$.
In particular, if $S$ is left (right) quasi-duo, then $S_J$ is left (right) quasi-duo for any subset $J \subseteq I$.
\end{lemma}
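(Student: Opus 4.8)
The plan is to realise $S_J$ as a one-sided corner of $S$ by writing down an explicit additive complement, and then to push quasi-duoness down along this corner relation by a lifting-and-contracting argument. For the corner structure, let $C$ be the additive subgroup of $S=R[X;D]$ spanned by all $a\,t$ with $a\in R$ and $t$ a monomial in $X$ that contains at least one indeterminate $x_i$ with $i\in I\setminus J$. Since each monomial in $X$ is either a word in $X_J$ or contains such an outside variable, the normal form gives a direct sum of additive groups $S=S_J\oplus C$, and $1_S=1_R=1_{S_J}$. To exhibit $S_J$ as a corner with complement $C$ it then suffices to verify the single inclusion $S_J\,C\subseteq C$. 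The reverse inclusion $C\,S_J\subseteq C$ genuinely fails — for $i\in I\setminus J$ the relation $x_i a=a x_i+\delta_i(a)$ produces the term $\delta_i(a)\in R\subseteq S_J$ — which is exactly why only one handedness survives; the opposite handedness is obtained from the mirror-image construction.

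The engine for $S_J C\subseteq C$ is a support computation. Take $s=a\,u\in S_J$ with $u$ a word in $X_J$ and $c=b\,v\in C$ with $v$ containing an outside variable, and reduce $s c=a\,(u b)\,v$ to normal form by commuting the coefficient $b$ leftwards through $u$ via $x_j b=b x_j+\delta_j(b)$. Each step either preserves the length of $u$ or deletes one of its letters, so $u b$ is an $R$-combination of terms (coefficient)$\cdot w$ with $w$ a subword of $u$, and in particular $w\in\langle X_J\rangle$. Multiplying on the right by $v$, every resulting monomial $w\,v$ still carries the outside variable of $v$ and hence lies in $C$; thus $S_J C\subseteq C$, and $S_J$ is a one-sided corner of $S$. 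The same bookkeeping yields $C\,C\subseteq C$, so $C$ is in fact a left ideal of $S$, and the projection $\pi\colon S\to S_J$ along $C$ is left $S_J$-linear.

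For the descent, assume $S$ is left quasi-duo and let $\mathfrak m$ be a maximal left ideal of $S_J$; I want $\mathfrak m S_J\subseteq\mathfrak m$, i.e. that $\mathfrak m$ is two-sided. The easy half is contraction: if $\mathfrak M$ is a maximal left ideal of $S$ with $\mathfrak M\cap S_J=\mathfrak m$, then $\mathfrak M$ is two-sided because $S$ is quasi-duo, so $m\in\mathfrak m\subseteq\mathfrak M$ and $s\in S_J\subseteq S$ give $ms\in\mathfrak M\cap S_J=\mathfrak m$. Such an $\mathfrak M$ would be obtained by taking any maximal left ideal of $S$ above $S\mathfrak m$: then $\mathfrak m\subseteq\mathfrak M\cap S_J\subsetneq S_J$ forces $\mathfrak M\cap S_J=\mathfrak m$ by maximality of $\mathfrak m$.

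The main obstacle is precisely the existence of this $\mathfrak M$, that is, the properness $S\mathfrak m\neq S$. Writing $S\mathfrak m=\mathfrak m+C\mathfrak m$ and projecting onto $S_J$ along $C$, one sees that $S\mathfrak m$ is proper as soon as the $S_J$-components $\pi(C\mathfrak m)$ remain inside $\mathfrak m$; but since $C$ is only a one-sided ideal — so that $S_J$ is \emph{not} a quotient ring of $S$ — these components can involve the derivation values $\delta_i(\mathfrak m)$ with $i\in I\setminus J$ and need not return to $\mathfrak m$. Controlling them is exactly where the corner hypothesis must be used in earnest, and it is the content of the general principle — the ``in particular'' of the statement — that one-sided quasi-duoness descends to one-sided corners; I would deduce it from the corner-ring techniques underlying \cite{Pa_LaDu_05}. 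The right-sided statement follows by the same reasoning with left and right interchanged.
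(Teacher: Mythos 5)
Your construction of the complement and the verification that $S_JC\subseteq C$ (pushing a coefficient leftwards through a word in $X_J$ produces only subwords that stay in $X_J$, while the outside variable of the right-hand factor survives) is essentially the paper's argument, and your observation that $CS_J\not\subseteq C$ for this complement is correct — it is in fact a point on which the paper's own write-up is careless. But the descent of quasi-duoness, which is the actual content of the ``in particular'', is left unproved: you explicitly concede that you cannot establish the properness of $S\mathfrak{m}$ and defer to a ``general principle that one-sided quasi-duoness descends to one-sided corners''. Stated without matching the sides, that principle is precisely what your own computation calls into question, so the deferral does not close the argument. (The paper closes it by citing \cite[Theorem~1.2]{Pa_LeMaPu_08}, the corner-descent theorem with the sides matched correctly.)

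The fix is short and you already have every piece. Your $C$ (left normal form) satisfies $S_JC\subseteq C$ and is therefore adapted to \emph{right} quasi-duo descent: for a maximal right ideal $\mathfrak{m}$ of $S_J$ one has $\mathfrak{m}S=\mathfrak{m}S_J+\mathfrak{m}C\subseteq\mathfrak{m}+S_JC\subseteq\mathfrak{m}\oplus C\neq S$, and your lifting-and-contracting paragraph then applies verbatim on the right. For the \emph{left} statement you must actually carry out the mirror construction you mention only in passing: write elements in right normal form $\sum_t t\,a_t$ and let $C'$ be spanned by the elements $t\,a$ with $t$ meeting $X\setminus X_J$. The symmetric support computation (now pushing coefficients rightwards, via $ax_j=x_ja-\delta_j(a)$) gives $S=S_J\oplus C'$ and $C'S_J\subseteq C'$, whence $S\mathfrak{m}\subseteq\mathfrak{m}+C'\mathfrak{m}\subseteq\mathfrak{m}\oplus C'\neq S$ for a maximal left ideal $\mathfrak{m}$ of $S_J$; a maximal left ideal $\mathfrak{M}$ of $S$ above $S\mathfrak{m}$ is two-sided by hypothesis, and $\mathfrak{M}\cap S_J=\mathfrak{m}$ is then a two-sided ideal of $S_J$, exactly as in your third paragraph. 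In short: each handedness of quasi-duoness requires its own complement, and the one you constructed serves the opposite side from the one you tried to use it for; there is no residual ``hard half'' once the sides are paired correctly.
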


\begin{proof}
Take $J \subseteq I$.
Denote by $X_J^+$ the set of all nontrivial monomials in $X_J$, that is $X_J^+$ is the set of all ``words" $x_{j_1}^{m_1}x_{j_2}^{m_2}\cdots x_{j_t}^{m_t}$ where $x_{j_i}$ ranges over $X_j$, and $t,m_i>0$. Now put
\begin{displaymath}
	C=\{\,f\in S\mid \supp(f)\cap X_J^+=\emptyset\,\}.
\end{displaymath}
To demonstrate that $S_J$ is a right corner of $S$, we will show $C$ is an additive group, $S_J\oplus C=S$ and $CS_J\subseteq C$. It is easy to show the first statement since $C$ is an additive subgroup of $S$ generated by all elements of the following form $ax_{i_1}^{t_1}x_{i_2}^{t_2}\cdots x_{i_m}^{t_m}\in S$ with $x_{i_1}^{t_1}x_{i_2}^{t_2}\cdots x_{i_m}^{t_m}\notin X_J^+$.
Again, by the definitions of $S_J$ and $C$, one has $S=S_J+C$ and $S_J\cap C=\{0\}$. Therefore, $S=S_J\oplus C$.
Now we must show that $CS_J\subseteq C$. If $f=x_{i_1}^{t_1}x_{i_2}^{t_2}\cdots x_{i_m}^{t_m}\in X_J^+$ and $g=x_{j_1}^{q_1}x_{j_2}^{q_2}\cdots x_{j_l}^{q_l}\notin X_J^+$, then $gf\in C$. Notice that $S_J$ is the set of all finite sums $\alpha f$ where $\alpha \in R$ and $f\in X_J^+$, and that $C$ is the set of all finite sums $\beta g$ where $\beta \in R$ and $g\notin X_J^+$. Thus, $CS_J\subseteq C$.
The last conclusion now follows directly from \cite[Theorem 1.2]{Pa_LeMaPu_08}.

Analogously, one can show that $S_J$ is a left corner of $S$ and that right quasi-duoness of $S$ implies right quasi-duoness of $S_J$.
\end{proof}

\begin{lemma}\label{St2.2}
Let $I$ be a non-empty countable set and let $S=R[X;D]$ be a left (right) quasi-duo differential polynomial ring in several indeterminates (as above).
If $R$ is a simple ring, then $R$ is a field, $\delta=0$ and $|I|=1$.
\end{lemma}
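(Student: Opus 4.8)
The plan is to combine the corner-subring reduction of Lemma~\ref{l2.1} with the single-variable result of Lemma~\ref{t2.2}, and then to rule out two or more indeterminates by realising a matrix ring as a homomorphic image.

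First I would reduce to the single-variable case. Fix $i\in I$ and apply Lemma~\ref{l2.1} with $J=\{i\}$: the corner subring $S_{\{i\}}=R[x_i;\delta_i]$ is again left (right) quasi-duo. Since $R$ is simple, Lemma~\ref{t2.2} forces $R$ to be a field, say $R=K$, and $\delta_i=0$. As $i\in I$ was arbitrary, this shows at once that $R=K$ is a field and that every $\delta_i=0$; consequently each commutation rule $x_ia=ax_i+\delta_i(a)$ degenerates to $x_ia=ax_i$, so $S=R[X;D]$ is the free associative $K$-algebra on the non-commuting set $X$.

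Next I would establish $|I|=1$ by contradiction. Suppose $|I|\geq 2$ and choose distinct indices $i_1,i_2\in I$; put $J=\{i_1,i_2\}$. By Lemma~\ref{l2.1} the corner $S_J=R[X_J;D_J]$ is left (right) quasi-duo, and by the first step it is the free algebra on the two non-commuting variables $x_{i_1},x_{i_2}$ over the field $K$. Consider the $K$-algebra homomorphism $\pi\colon S_J\to M_2(K)$ into the ring of $2\times 2$ matrices over $K$ determined by $x_{i_1}\mapsto e_{12}$ and $x_{i_2}\mapsto e_{21}$, where $e_{12},e_{21}$ denote the usual matrix units. Since $e_{12}e_{21}=e_{11}$ and $e_{21}e_{12}=e_{22}$, the images of $x_{i_1}$ and $x_{i_2}$ generate all of $M_2(K)$, so $\pi$ is surjective and $M_2(K)$ is a homomorphic image of $S_J$.

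Finally I would invoke the characterization recalled in the introduction, that a ring is left (right) quasi-duo precisely when each of its left (right) primitive homomorphic images is a division ring (see \cite[Proposition 4]{Pu15}). The ring $M_2(K)$ is simple Artinian, hence both left and right primitive, yet it is not a division ring (for instance $e_{12}$ is nonzero but not invertible). Thus $S_J$ possesses a primitive homomorphic image that is not a division ring, contradicting quasi-duoness of $S_J$. Hence $|I|=1$, and together with the first step this yields the claim. The only step requiring an idea is the construction of the surjection onto $M_2(K)$: the fact that two non-commuting generators already suffice to map onto a simple, non-division ring is exactly what obstructs more than one indeterminate, whereas everything else reduces to direct appeals to Lemmas~\ref{l2.1} and~\ref{t2.2}.
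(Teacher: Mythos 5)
Your proposal is correct. The first half coincides with the paper's argument: both apply Lemma~\ref{l2.1} with $J=\{i\}$ and then Lemma~\ref{t2.2} to conclude that $R$ is a field $K$ and every $\delta_i=0$, so that $S$ becomes the free $K$-algebra on $X$. Where you diverge is in ruling out $|I|\geq 2$. The paper invokes the classical (and not entirely trivial) fact that the free algebra $K[X]$ on two or more variables is left primitive, cited from Jacobson, and then applies \cite[Proposition 4.1]{Pa_LaDu_05} to force $K[X]$ to be a division ring, a contradiction. You instead pass to the corner $S_J$ on two variables and exhibit an explicit surjection onto $M_2(K)$ via $x_{i_1}\mapsto e_{12}$, $x_{i_2}\mapsto e_{21}$; since $e_{12}e_{21}=e_{11}$ and $e_{21}e_{12}=e_{22}$, the image contains all matrix units, so the map is onto, and $M_2(K)$ is a left primitive homomorphic image that is not a division ring, contradicting the characterization of quasi-duo rings recalled in the introduction. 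Your route is more elementary and self-contained: it replaces the external primitivity result for free algebras by a two-line computation with matrix units, at the cost of nothing except having to note that homomorphic images are governed by the primitive-image characterization (which the paper already uses elsewhere). Both arguments are valid; yours would arguably make the lemma independent of the Jacobson reference.
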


\begin{proof}
Suppose that $S$ is left quasi-duo. (The right quasi-duo case can be treated analogously and is therefore omitted.)
Take $i\in I$. By Lemma~\ref{l2.1}, $S_i=K[x_i;\delta_i]$ is left quasi-duo.
Using Lemma~\ref{t2.2} we conclude that $R$ is a field and that $\delta_i=0$.

It remains to show that $|I|=1$. If $|I|>1$, then it is well-known that the free polynomial algebra $K[X]$ is left primitive (see e.g. \cite[Page 36]{Bo_Ja_64}). In view of \cite[Proposition 4.1]{Pa_LaDu_05}, $K[X]$ is a division ring. This is a contradiction. Therefore, $|I|=1$.
\end{proof}

\begin{theorem}\label{thm:several}
Let $I$ be a non-empty countable set and let $S=R[X;D]$ be a differential polynomial ring in several indeterminates (as above).
If $S$ is left (right) quasi-duo, then $|I|=1$.
\end{theorem}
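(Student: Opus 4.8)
The plan is to reduce the general statement about $S=R[X;D]$ to the simple-ring case that was already handled in Lemma~\ref{St2.2}. The key obstacle is that $R$ need not be simple, so I cannot invoke Lemma~\ref{St2.2} directly. Instead, I would pass to a suitable quotient of $R$ that is simple, transport the quasi-duo property and the derivations to that quotient, and derive a contradiction from $|I|>1$.

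First I would argue by contradiction and suppose $|I|>1$. Pick any two distinct indices $i,j \in I$. By Lemma~\ref{l2.1}, the subring $R[\{x_i,x_j\};\{\delta_i,\delta_j\}]$ is a left (right) corner of $S$, hence is again left (right) quasi-duo; this lets me reduce to the case $|I|=2$. So from now on I work with $T=R[x_i,x_j;\delta_i,\delta_j]$, a quasi-duo differential polynomial ring in exactly two indeterminates.

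Next I would produce a simple quotient of $R$ to which the construction descends. The idea is to choose a maximal ideal $M$ of $T$ and set $M_0 = M\cap R$; mimicking the argument in the proof of Proposition~\ref{Prop:Nec} and Remark~\ref{r4.1}, each $M_0$ should be a two-sided ideal of $R$ that is $\delta_i$- and $\delta_j$-invariant, so that the $\delta_i$ induce derivations $\overline{\delta_i}$ on $R/M_0$ and there is a surjective ring morphism $T \to (R/M_0)[x_i,x_j;\overline{\delta_i},\overline{\delta_j}]$. By the remark on \cite[Page 245]{Pa_LaDu_05} about homomorphic images, the target is again quasi-duo. The main technical point — and the step I expect to be the genuine obstacle — is to guarantee that I can arrange the base ring of this quotient to be \emph{simple} (or at least a field), so that Lemma~\ref{St2.2} applies and forces $|I|=1$, contradicting $|I|=2$. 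One clean way to secure this is to observe that the single-variable corner $R[x_i;\delta_i]$ is quasi-duo, so by Proposition~\ref{Prop:Nec} the quotient $R/(J(R[x_i;\delta_i])\cap R)$ is commutative and $\delta_i$ lands in the radical; choosing a maximal ideal of this commutative quotient yields a field $K=R/M_0$ with $\overline{\delta_i}=\overline{\delta_j}=0$, and the induced map sends $T$ onto the free polynomial ring $K[x_i,x_j]$ in two non-commuting variables.

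Finally, once I have a surjection from a quasi-duo ring onto $K[x_i,x_j]$ with $K$ a field, I would invoke that quasi-duoness passes to this homomorphic image and then quote the fact used in Lemma~\ref{St2.2}: the free algebra $K\langle x_i,x_j\rangle$ on two or more non-commuting indeterminates is left primitive (see e.g. \cite[Page 36]{Bo_Ja_64}), hence by \cite[Proposition 4.1]{Pa_LaDu_05} it would have to be a division ring, which is absurd. This contradiction shows $|I|=1$. The delicate bookkeeping — verifying $\delta$-invariance of $M_0$ in the multivariate setting and confirming that the two-variable quotient is genuinely the free (non-commutative) polynomial ring rather than a commutative one — is where I would spend the most care, but conceptually the whole argument is a reduction to the already-proved simple-ring Lemma~\ref{St2.2}.
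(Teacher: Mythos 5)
Your overall architecture---restrict to a two-variable corner via Lemma~\ref{l2.1}, pass to a field quotient that kills both derivations, land in the free algebra $K\langle x_i,x_j\rangle$, and contradict quasi-duoness via left primitivity---is sound in outline, and its endgame is exactly the one used in Lemma~\ref{St2.2}. The gap is in the middle step, where you produce $M_0$. You take $M_0$ to be a maximal ideal of $R$ containing $J_0^{(i)}:=J(R[x_i;\delta_i])\cap R$; Proposition~\ref{Prop:Nec} applied to the corner $R[x_i;\delta_i]$ then gives $\delta_i(R)\subseteq M_0$ and $R/M_0$ commutative, but it tells you nothing about $\delta_j$. Without $\delta_j(M_0)\subseteq M_0$ the induced ring $(R/M_0)[x_i,x_j;\overline{\delta_i},\overline{\delta_j}]$ is not even well defined, and without $\delta_j(R)\subseteq M_0$ its image is not the free algebra $K[x_i,x_j]$, so the primitivity punchline cannot be fired. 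The obvious repair---apply Proposition~\ref{Prop:Nec} to the other corner $R[x_j;\delta_j]$ as well and choose $M_0$ containing $J_0^{(i)}+J_0^{(j)}$---requires knowing that this sum is a proper ideal of $R$, which you have not established (it would follow from $J(R[x_k;\delta_k])\cap R\subseteq J(R)$, but that needs its own argument and is not in the paper).

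The paper sidesteps this by choosing the ideal one level up: it takes a maximal left ideal $M$ of the full ring $S$ itself and sets $M_0=M\cap R$. Since $M$ is two-sided by quasi-duoness, $\delta_i(a)=x_ia-ax_i\in M\cap R$ for every $a\in M_0$ and every $i\in I$, so $M_0$ is simultaneously invariant under \emph{all} the derivations for free. The same two-case split as in Proposition~\ref{Prop:Nec} then shows that either $R$ itself (when some maximal left ideal meets $R$ trivially) or $R/M_0$ (otherwise, via the surjection $\varphi$ onto $(R/M_0)[X;\overline{D}]$ and the first case applied to the image of $M$, which meets $R/M_0$ trivially) is a division ring; Lemma~\ref{St2.2} then gives $|I|=1$ directly, with no need to first reduce to two variables. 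If you replace your choice of $M_0$ by $M\cap R$ for a maximal left ideal $M$ of $T$ and add the division-ring step, your argument closes.
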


\begin{proof}
The proof is essentially the same as the proof of Proposition~\ref{Prop:Nec}, and we will therefore omit some details.
As in the proof of Proposition~\ref{Prop:Nec} we need to consider two cases:

{\bf Case 1.} This will lead to that $R$ is a division ring. Thus, by Lemma~\ref{St2.2}, we get that $|I|=1$.

{\bf Case 2.} Analogously to the proof of Proposition \ref{Prop:Nec}
we will define a surjective ring morphism
\begin{displaymath}
		\varphi :  R[X;D]\to (R/M_0)[X;\overline{D}],
		\quad a_0+a_1t_1+\cdots+a_nt_n\mapsto \overline {a_0}+\overline{a_1}t_1+\cdots+\overline{a_n}t_n
\end{displaymath}
and use it to conclude that $R/M_0[X;\overline{D}]$ is left (right) quasi-duo.
By invoking case 1, we conclude that $|I|=1$.
\end{proof}

\end{document}